\documentclass[11pt]{amsart}
\usepackage{amssymb, adjustbox, amsbsy}
\usepackage{array, longtable, booktabs, tabularx}
\usepackage{geometry}

\newcolumntype{Y}{>{\centering\arraybackslash}X}

\usepackage{amsfonts, amscd}
\numberwithin{equation}{section}

\usepackage{bm}
\usepackage{verbatim}
\usepackage{graphicx}
\graphicspath{{images/}}
\usepackage{tikz, tikz-cd}
\usepackage{subcaption}
\usepackage{listings}
\usepackage{subfiles}
\usepackage{mathtools}
\usepackage{comment}
\usepackage{enumitem}
\usepackage[all]{xy}
\usepackage{hyperref}
\hypersetup{
    colorlinks=true,
    citecolor=red,
    linkcolor=blue,
    filecolor=magenta,      
    urlcolor=red,
}

\usepackage[textsize=footnotesize,bordercolor=red,linecolor=red,backgroundcolor=red!15]{todonotes}

\newcommand{\Spec}{\mathrm{Spec}}

\newcommand{\Ext}{\operatorname{Ext}}

\newcommand{\Ff}{\mathcal{F}}
\newcommand{\Gg}{\mathcal{G}}

\newcommand{\Ee}{\mathcal{E}}

\newcommand{\Ll}{\mathcal{L}}

\newtheorem{thm}{Theorem}[section]

\newtheorem{lem}[thm]{Lemma}

\newtheorem{consthm}[thm]{Construction-Theorem}

\theoremstyle{definition}

\newtheorem{ques}[thm]{Question}
\newtheorem{rem}[thm]{Remark}

\tikzstyle{wbullet}=[circle, draw=black, fill=white, thick, inner sep=2pt, minimum size=1.5mm]
\tikzstyle{bbullet}=[circle, draw=black, fill=black, inner sep=2pt, minimum size=1.5mm]

\begin{document}

\title{On a question of Koll\'ar and Kov\'acs}
\author{Jihao Liu}
\address{Department of Mathematics, Peking University, No. 5 Yiheyuan Road, Haidian District, Beijing 100871, China}
\address{Beijing International Center for Mathematical Research, Peking University, No. 5 Yiheyuan Road, Haidian District, Beijing 100871, China}
\email{liujihao@math.pku.edu.cn}

\subjclass[2020]{14D06, 14F17, 14J10}
\keywords{Flat families, cohomology of structure sheaves, projective bundles}
\date{\today}

\begin{abstract}
We answer a question of Koll\'ar and Kov\'acs by constructing a flat projective morphism to a smooth curve whose fibers are Cohen--Macaulay and reduced, whose generic fiber is smooth, and for which the first cohomology of the structure sheaf of the fibers is not constant. The main result of this paper is obtained by generative AI, particularly Chatgpt 5.5 pro and the Rethlas system.
\end{abstract}

\maketitle

\section{Introduction}\label{sec:introduction}

We work over the field of complex numbers $\mathbb C$.
In the study of higher direct images of sheaves, Koll\'ar and Kov\'acs proposed the following question.

\begin{ques}[{\cite[Question~14]{KK25}}]\label{ques:kk25}
Is there a flat, projective morphism $g\colon X_C\to C$ to a smooth curve such that
\begin{enumerate}
\item all fibers of $g$ are CM and reduced,
\item the generic fiber of $g$ is smooth, and
\item the function $c\mapsto h^1(X_{C,c},\mathcal{O}_{X_{C,c}})$ is not constant?
\end{enumerate}
\end{ques}

The goal of this paper is to provide a positive answer to Question~\ref{ques:kk25}.

\begin{thm}\label{thm:kk25-14}
There exists a flat projective morphism $g\colon X_C\to C$ satisfying (1)--(3) of Question~\ref{ques:kk25}. More precisely, we have
\begin{equation}\label{eq:kk25-h1-values}
    h^1(X_{C,0},\mathcal{O}_{X_{C,0}})=1\quad \text{and}\quad h^1(X_{C,c},\mathcal{O}_{X_{C,c}})=0\quad \text{for any closed point}\quad c\in C\setminus\{0\}.
\end{equation}
This provides a positive answer to Question~\ref{ques:kk25}.
\end{thm}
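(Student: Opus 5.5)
The first step is a reduction that dictates the shape of the example. By the deformation invariance of cohomology for Du Bois fibers (Kov\'acs--Schwede, Du Bois--Jarraud), if $X_{C,0}$ were Du Bois then $c\mapsto h^1(X_{C,c},\mathcal O_{X_{C,c}})$ would be constant near $0$. So the special fiber must be CM, reduced and \emph{non}-Du Bois.

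Next I fix the dimension. For curve fibers, flatness makes $\chi(\mathcal O)$ constant. By Zariski connectedness the special fiber is connected, and since it is reduced this gives $h^0=1$, so $h^1$ is forced constant. Hence I will take surface fibers, or fibers of dimension $n\ge 2$. For surfaces, constancy of $\chi$ and $h^0=1$ give
\[
h^1(X_{C,0},\mathcal O)-h^2(X_{C,0},\mathcal O)=h^1(X_{C,c},\mathcal O)-h^2(X_{C,c},\mathcal O).
\]
So for surfaces the jump $0\to 1$ in $h^1$ must be matched by an equal jump in $h^2$. I will aim for generic fiber with $h^1=h^2=0$, for instance a smooth rational or ruled-over-$\mathbb P^1$ variety, and special fiber with $h^1=h^2=1$.

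Projective cones over ACM varieties are useless here, because an ACM projective variety of dimension $\ge 2$ has $h^1(\mathcal O)=0$. Likewise, contracting the negative section of a ruled surface over an elliptic curve $E$ gives $h^1=0$: the length-one $R^1\pi_*\mathcal O$ at the elliptic point kills $H^1(E,\mathcal O)$. I would therefore build $X_{C,0}$ as a non-normal, but $S_2$ hence CM, variety whose normalization is a projective bundle $\mathbb P_E(\mathcal F)$ over an elliptic curve. The gluing along the conductor should be chosen so that the natural map $H^1(\mathbb P_E(\mathcal F),\mathcal O)\cong H^1(E,\mathcal O)\to H^1(\text{conductor},\mathcal O)$ is zero. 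Then the class from $E$ survives in $H^1(X_{C,0},\mathcal O)$, and the gluing is not seminormal, which is consistent with the failure of the Du Bois property.

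Concretely, I would realize the whole family as a closed subscheme of a projective bundle $\mathbb P_{C\times B}(\mathcal V)$ over $C\times B$, where $B$ is a rational base. It would be cut out by equations depending on the parameter $t$ of $C$ such that for $t\neq 0$ the fiber is a smooth projective bundle over a rational variety, so $h^1=h^2=0$, and at $t=0$ the equations degenerate to the glued variety above.

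The verification then goes in the following order.
\begin{enumerate}
\item Flatness over $C$: $C$ is a smooth curve, so it suffices to check that $X_C$ has no component or embedded component supported over $0$, i.e.\ that $t$ is a nonzerodivisor.
\item Smoothness of the generic fiber, by a Jacobian computation.
\item Reducedness and CM-ness of $X_{C,0}$. This is local, and I would check it via the conductor square, or by exhibiting $X_{C,0}$ as a complete intersection in a smooth ambient space.
\item The value $h^1=1$ at $0$, using the normalization sequence
\[
0\to\mathcal O_{X_{C,0}}\to\nu_*\mathcal O_{\widetilde X}\to\mathcal Q\to 0
\]
together with the Leray spectral sequence for $\mathbb P_E(\mathcal F)\to E$.
\item The value $h^1=0$ for every $c\neq 0$, by Leray for the projective bundle over a rational base. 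Upper semicontinuity alone would only give this generically, so the explicit description of every fiber $c\neq 0$ is needed.
\end{enumerate}

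I expect the main obstacle to be step (3) together with the choice of gluing. The special fiber has to be simultaneously CM, reduced, non-Du Bois with the $H^1$-class surviving, and a flat limit of smooth fibers with $h^1=0$. These conditions pull against each other: making the conductor map on $H^1$ vanish tends to destroy either the $S_2$ property or smoothability. I would first try the explicit equations and compute the local depth directly at the non-normal locus.
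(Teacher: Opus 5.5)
Your preliminary reductions are sound, and they match the paper, where $h^2$ also jumps from $0$ to $1$. The gap is that you never produce a family: $\mathcal V$, the equations and the gluing are left unspecified, and the decisive step is left open. For an existence statement the construction is the whole proof.

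Worse, the mechanism you aim for cannot occur in this setting. Here is why.
\begin{enumerate}
\item Near $0$ the fibers are smooth with $h^1(\mathcal O)=0$, hence $b_1=0$.
\item The total space is CM and regular in codimension one, because $X_{C,0}$ is reduced, so it is normal. Hence $H^1(X_{C,0},\mathbb C)=H^1(g^{-1}(\Delta),\mathbb C)$ injects into $H^1$ of a resolution, which has connected fibers.
\item On a smooth total space, $H^1$ of the central fiber injects into $H^1$ of a nearby fiber. Indeed, $H^1$ with supports in the central fiber is dual to its $H_{2n+1}$, which is zero. Also, the pull-back of the generator of $H^1(\Delta^*)$ does not extend across the central fiber. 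Hence $H^1(X_{C,0},\mathbb C)=0$.
\item By Du Bois, $\mathbb H^1(\underline{\Omega}^0_{X_{C,0}})=\mathrm{Gr}^0_F H^1(X_{C,0},\mathbb C)=0$. Since $\mathcal H^0(\underline{\Omega}^0_{X_{C,0}})=\mathcal O_{X_{C,0}^{\mathrm{sn}}}$, and $H^1$ of this sheaf injects into $\mathbb H^1$, we also get $H^1(X_{C,0}^{\mathrm{sn}},\mathcal O)=0$.
\item Since $\nu\colon\widetilde X\to X_{C,0}$ factors through the seminormalization, the pull-back $H^1(X_{C,0},\mathcal O)\to H^1(\widetilde X,\mathcal O)$ is zero.
\end{enumerate}
So no class from $E$ can survive. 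By the conductor square, the map $H^1(\widetilde X,\mathcal O)\to H^1(D,\mathcal O_D)$ to the conductor $D\subset\widetilde X$ is even injective, which is the opposite of what you require. An irrational normalization is therefore useless. The whole jump must come from $H^0(\mathcal Q)$, where $\mathcal Q=\nu_*\mathcal O_{\widetilde X}/\mathcal O_{X_{C,0}}$, that is, from the non-seminormal gluing itself. The tension you anticipated in step (3) is a genuine obstruction, not just a difficulty.

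The missing idea in the paper is to put the jump into the ambient space, which makes any normalization or conductor analysis unnecessary. It takes $\Ee=\Gg\oplus\mathcal O_{B\times T}$, with $\Gg$ the extension of $\pi_B^*\mathcal O_B(1)$ by $\pi_B^*\mathcal O_B(-1)$ of class $te$. Then $\Ee_c\cong\mathcal O_B^{\oplus 3}$ for $c\neq 0$ and $\Ee_0\cong\mathcal O_B(-1)\oplus\mathcal O_B\oplus\mathcal O_B(1)$. For \emph{every} divisor $X\in|\mathcal O_P(4)\otimes p^*\mathcal O_B(1)|$ in $P=\mathbb P_B(\Ee_c)$, the ideal sequence, Leray and relative duality give $H^1(X,\mathcal O_X)\cong H^0(B,\Ee_c^\vee\otimes\mathcal O_B(-1))$. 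This has dimension $0$ for $c\neq 0$ and $1$ for $c=0$, regardless of the singularities of $X$.

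The remaining conditions are routine:
\begin{enumerate}
\item Take $\sigma=ay^4+bq^4+t\tau$ with $\tau$ chosen by Bertini over $\mathbb C(t)$; after deleting finitely many points, the fibers over $c\neq 0$ are smooth.
\item Flatness and CM hold because each $X_c$ is a Cartier divisor in the smooth threefold $P_c$.
\item $X_{C,0}$ is reduced because it is $S_1$ and its generic equation $r_vv^4+r_ww^4$ is square-free.
\end{enumerate}
This is consistent with the obstruction above. The general fiber of $X_{C,0}\to B$ consists of four concurrent lines. Its normalization is a ruled surface over the rational curve $\lambda^4=-b/a$, with $\lambda=y_0/q_0$. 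Accordingly, $H^1(X_{C,0},\mathcal O)\cong H^0(\mathcal Q)$.
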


We refer the reader to Construction-Theorem~\ref{consthm:B} for the precise construction.

\begin{rem}
It is worth mentioning that, in the construction of Theorem~\ref{thm:kk25-14}, the curve $C$ is obtained from $\mathbb A^1$ by deleting finitely many points away from $0$.
This is used to ensure that the non-special fibers are smooth and to avoid finitely many bad parameters.
It remains interesting to ask whether one can construct an example with $C$ projective.
\end{rem}

\begin{rem}
The main result of this paper is obtained by generative AI by first asking a question to Chatgpt 5.5 pro for general ideas, then putting the idea into the Rethlas system. The Rethlas system then provides the right example.

See \cite{Ju+26} for a detailed introduction to the Rethlas system. Due to the limitation of generative AI, it is possible that we have missed some related references in the literature, and we welcome any comments from experts.
\end{rem}

\subsection*{Acknowledgements}
The author was partially supported by the National Key R\&D Program of China \#\allowbreak 2024YFA1014400. The author would like to thank the Rethlas team, namely Haocheng Ju, Jiedong Jiang, Shurui Liu, Guoxiong Gao, Yuefeng Wang, Zeming Sun, Bin Wu, Liang Xiao, and Bin Dong, for their contributions to the development of Rethlas and its customized version used for the problem studied in this paper. The author would like to thank Kaiyuan Gu, Ruicheng Hu, and Sheng Qin for assistance with the verification of an earlier blueprint of this paper. The author would like to thank Ruochuan Liu and Gang Tian for constant support and encouragement.

\section{Cohomology computation}

In this section, we compute the first cohomology of the structure sheaf of a hypersurface in a projective bundle over $\mathbb P^1$.

\begin{lem}\label{lem:C}
Let $B:=\mathbb P^1$. Let $\Ee$ be a rank $3$ vector bundle over $B=\mathbb P^1$ such that $\det\Ee\cong\mathcal{O}_B$, let $p\colon P:=\mathbb P_B(\Ee)\to B$ be the projective bundle, and let $X\subset P$ be the subscheme defined by a non-zero section of $\mathcal O_P(4)\otimes p^*\mathcal O_B(1)$.
Then
\begin{equation}\label{eq:h1-formula}
H^1(X,\mathcal O_X)\cong H^0(B,\Ee^\vee\otimes\mathcal O_B(-1)).
\end{equation}
In particular:
\begin{enumerate}
    \item If $\Ee\cong \mathcal O_B^{\oplus 3}$, then $h^1(X,\mathcal{O}_X)=0$.
    \item If $\Ee\cong\mathcal O_B(-1)\oplus\mathcal O_B\oplus\mathcal O_B(1)$, then $h^1(X,\mathcal{O}_X)=1$.
\end{enumerate}
\end{lem}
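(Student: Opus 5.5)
The plan is to compute $H^1(X,\mathcal O_X)$ from the ideal sheaf sequence of the divisor $X\subset P$, and then to evaluate the resulting cohomology group on $P$ by pushing forward to $B$. Here $P$ is integral and $X$ is cut out by a non-zero section of $L:=\mathcal O_P(4)\otimes p^*\mathcal O_B(1)$. So $X$ is an effective Cartier divisor and there is an exact sequence
\[
0\to \mathcal O_P(-4)\otimes p^*\mathcal O_B(-1)\to \mathcal O_P\to \mathcal O_X\to 0 .
\]
Since $P$ is a $\mathbb P^2$-bundle over $\mathbb P^1$, we have $R^ip_*\mathcal O_P=0$ for $i>0$ and $p_*\mathcal O_P=\mathcal O_B$. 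Hence $H^i(P,\mathcal O_P)=H^i(\mathbb P^1,\mathcal O)$, which vanishes for $i=1,2$. The long exact sequence then gives
\[
H^1(X,\mathcal O_X)\cong H^2\bigl(P,\mathcal O_P(-4)\otimes p^*\mathcal O_B(-1)\bigr).
\]

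Next I would compute this $H^2$ via the Leray spectral sequence for $p$. I use the convention $P=\mathrm{Proj}\,\mathrm{Sym}\,\Ee$, so that $p_*\mathcal O_P(m)=\mathrm{Sym}^m\Ee$ for $m\ge 0$. On the fibres $\mathbb P^2$, the sheaf $\mathcal O(-4)$ has cohomology only in degree $2$. Therefore $p_*\mathcal O_P(-4)=R^1p_*\mathcal O_P(-4)=0$, and $R^2p_*\mathcal O_P(-4)$ is locally free. To identify it I use relative Serre duality. The relative dualizing sheaf is $\omega_{P/B}\cong\mathcal O_P(-3)\otimes p^*\det\Ee\cong\mathcal O_P(-3)$, because $\det\Ee\cong\mathcal O_B$. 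Hence
\[
R^2p_*\mathcal O_P(-4)\cong \bigl(p_*(\mathcal O_P(4)\otimes\omega_{P/B})\bigr)^\vee=(p_*\mathcal O_P(1))^\vee=\Ee^\vee .
\]
Since only $R^2p_*$ is nonzero, Leray degenerates, and the projection formula gives
\[
H^2\bigl(P,\mathcal O_P(-4)\otimes p^*\mathcal O_B(-1)\bigr)\cong H^0\bigl(B,\Ee^\vee\otimes\mathcal O_B(-1)\bigr).
\]
This proves the formula of Lemma~\ref{lem:C}.

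The main point requiring care is the bookkeeping of conventions, namely $\omega_{P/B}$ and the identification $p_*\mathcal O_P(1)=\Ee$ versus $\Ee^\vee$. The dual convention would yield $\Ee\otimes\mathcal O_B(-1)$ in place of $\Ee^\vee\otimes\mathcal O_B(-1)$. However, both splitting types in (1) and (2) are self-dual, so the numerical conclusions are insensitive to this choice.

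It remains to evaluate the two cases:
\begin{enumerate}
\item If $\Ee\cong\mathcal O_B^{\oplus3}$, then $\Ee^\vee(-1)\cong\mathcal O_B(-1)^{\oplus 3}$, which has no sections, so $h^1(X,\mathcal O_X)=0$.
\item If $\Ee\cong\mathcal O_B(-1)\oplus\mathcal O_B\oplus\mathcal O_B(1)$, then $\Ee^\vee(-1)\cong\mathcal O_B\oplus\mathcal O_B(-1)\oplus\mathcal O_B(-2)$, which has exactly one section up to scaling, so $h^1(X,\mathcal O_X)=1$.
\end{enumerate}
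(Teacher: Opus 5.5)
Your proposal is correct and follows essentially the same route as the paper: the ideal-sheaf sequence and the vanishing of $H^1$ and $H^2$ of $\mathcal O_P$, then Leray with $p_*\Ff=R^1p_*\Ff=0$, then relative Serre duality with $\omega_{P/B}\cong\mathcal O_P(-3)\otimes p^*\det\Ee$ to identify $R^2p_*\Ff\cong\Ee^\vee\otimes\mathcal O_B(-1)$. The only cosmetic difference is that you apply duality to $\mathcal O_P(-4)$ and then twist via the projection formula, whereas the paper dualizes $\Ff$ directly; your remark that the two splitting types are self-dual, so the numerics do not depend on the projectivization convention, is a correct side observation.
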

\begin{proof}
The scheme $X$ is a Cartier divisor with defining sequence
\[
0\to \Ff\to\mathcal O_P\to\mathcal O_X\to 0,
\qquad \Ff:=\mathcal O_P(-4)\otimes p^*\mathcal O_B(-1).
\]
The projective bundle $p$ satisfies
$p_*\mathcal O_P=\mathcal O_B$
and $R^i p_*\mathcal O_P=0$ for $i>0$.
Since $B=\mathbb P^1$, we have
$H^1(B,\mathcal O_B)=H^2(B,\mathcal O_B)=0$, and the Leray spectral sequence yields
\[
H^1(P,\mathcal O_P)=H^2(P,\mathcal O_P)=0.
\]
Thus
\[
H^1(X,\mathcal O_X)\cong H^2(P,\Ff).
\]
On every geometric fiber $\mathbb P^2$ of $p$, the restriction of $\Ff$ is
$\mathcal O_{\mathbb P^2}(-4)$ hence has vanishing $H^0$ and $H^1$, so
$$p_*\Ff=R^1 p_*\Ff=0$$ by cohomology and base change (cf.~\cite[III, Corollary~12.9]{Har77}).
Hence
\[
H^2(P,\Ff)\cong H^0(B,R^2 p_*\Ff).
\]
For the projective bundle $p$, relative Serre duality gives
\[
(R^2 p_*\Ff)^\vee
\cong p_*(\Ff^\vee\otimes\omega_{P/B})
= p_*\!\bigl(\mathcal O_P(1)\otimes p^*(\mathcal O_B(1)\otimes\det \Ee)\bigr)
= \Ee\otimes\mathcal O_B(1)\otimes\det \Ee.
\]
Dualizing and using $\det \Ee\cong\mathcal O_B$, we obtain
\[
R^2 p_*\Ff\cong \Ee^\vee\otimes\mathcal O_B(-1).
\]
Therefore,
\[
H^1(X,\mathcal O_X)\cong H^0(B,\Ee^\vee\otimes\mathcal O_B(-1)).
\]
If $\Ee\cong\mathcal O_B^{\oplus 3}$, then
\[
\Ee^\vee\otimes\mathcal O_B(-1)
\cong\mathcal O_B(-1)^{\oplus 3},
\]
so $h^0(B,\Ee^\vee\otimes\mathcal O_B(-1))=0$. If
\[
\Ee\cong\mathcal O_B(-1)\oplus\mathcal O_B\oplus\mathcal O_B(1),
\]
then
\[
\Ee^\vee\otimes\mathcal O_B(-1)
\cong\mathcal O_B\oplus\mathcal O_B(-1)\oplus\mathcal O_B(-2),
\]
so $h^0(B,\Ee^\vee\otimes\mathcal O_B(-1))=1$. The lemma follows.
\end{proof}

\section{Extension family and special section}\label{sec:extension-family}

In this section, we construct the family of vector bundles and the special section used in the construction of the example.

\begin{lem}\label{lem:A}
Let $B=\mathbb P^1$, $T=\mathbb A^1=\Spec\mathbb C[t]$, and let $\pi_B\colon B\times T\to B$ and $\pi_T\colon B\times T\to T$ be the two projections. There exists a rank-two locally free sheaf $\Gg$ on $B\times T$ fitting into
\begin{equation}\label{eq:jumping-extension}
0\to\pi_B^*\mathcal{O}_B(-1)\to \Gg\to\pi_B^*\mathcal{O}_B(1)\to 0
\end{equation}
such that
\begin{equation}\label{eq:G-fibers}
    \Gg_0:=\Gg|_{B\times\{0\}}\cong \mathcal{O}_B(-1)\oplus\mathcal{O}_B(1)\qquad \text{and}\qquad \Gg_c:=\Gg|_{B\times\{c\}}\cong \mathcal{O}_B^{\oplus 2}
\end{equation}
for any $c\not=0$. In particular, if
\begin{equation}\label{eq:E-family}
   \Ee:=\Gg\oplus\mathcal{O}_{B\times T}, 
\end{equation}
then $\det(\Ee_c)\cong\mathcal O_B$ for every closed $c\in T$, where $\Ee_c:=\Ee|_{B\times\{c\}}$.
\end{lem}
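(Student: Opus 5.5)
We build $\Gg$ as the universal extension along the line spanned by a nonzero class in
\[
\Ext^1_B(\mathcal O_B(1),\mathcal O_B(-1))\cong H^1(B,\mathcal O_B(-2))\cong\mathbb C .
\]
Fix a nonzero class $e$ in this space. The pulled-back extension group is
\[
\Ext^1_{B\times T}\bigl(\pi_B^*\mathcal O_B(1),\pi_B^*\mathcal O_B(-1)\bigr)\cong H^1(B\times T,\pi_B^*\mathcal O_B(-2))\cong H^1(B,\mathcal O_B(-2))\otimes_{\mathbb C}\mathbb C[t],
\]
by flat base change along $T\to\Spec\mathbb C$, since $T$ is affine. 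Inside it take the class $t\cdot e$, and let $\Gg$ be the corresponding extension \eqref{eq:jumping-extension}. It is locally free of rank two, being an extension of line bundles.

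Next we compute the restriction to $B\times\{c\}$. The sequence \eqref{eq:jumping-extension} stays exact after restriction because the quotient is locally free. Restricting Ext classes is compatible with the base change map above, so $\Gg_c$ is the extension of $\mathcal O_B(1)$ by $\mathcal O_B(-1)$ with class $c\cdot e$.

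For $c=0$ the class vanishes, the extension splits, and $\Gg_0\cong\mathcal O_B(-1)\oplus\mathcal O_B(1)$.

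For $c\neq0$ the extension is nonsplit. Then $\Gg_c$ has rank two and degree $0$, so $\Gg_c\cong\mathcal O_B(a)\oplus\mathcal O_B(-a)$ with $a\ge0$ by Grothendieck's theorem. To force $a=0$, apply $\otimes\mathcal O_B(-1)$ and take cohomology:
\[
0\to H^0(\mathcal O_B(-2))\to H^0(\Gg_c(-1))\to H^0(\mathcal O_B)\xrightarrow{\ \delta\ } H^1(\mathcal O_B(-2)).
\]
The connecting map $\delta$ sends $1$ to the extension class $c\,e\neq0$. Since $\dim H^0(\mathcal O_B)=1$, the map $\delta$ is injective. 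Together with $H^0(\mathcal O_B(-2))=0$, this gives $H^0(\Gg_c(-1))=0$. That rules out any summand $\mathcal O_B(a)$ with $a\ge1$, hence $\Gg_c\cong\mathcal O_B^{\oplus2}$.

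Finally, set $\Ee:=\Gg\oplus\mathcal O_{B\times T}$. Then
\[
\det\Ee_c=\det\Gg_c\cong\mathcal O_B(-1)\otimes\mathcal O_B(1)\cong\mathcal O_B,
\]
using the exact sequence for $\Gg_c$.

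The main obstacle is making rigorous that the restriction of the extension class to each fiber is $c\cdot e$. We handle it by working with Čech cocycles on the standard cover $U_0\times T$, $U_1\times T$. There $\Gg$ is glued by the transition matrix
\[
\begin{pmatrix} x^{-1} & t\,x^{-1}\\ 0 & x\end{pmatrix},
\]
up to the chosen conventions, where $x$ is the coordinate on $U_0\cap U_1$. The $x^{-1}$ entry in the upper-right corner represents the generator of $H^1(\mathcal O_B(-2))$. Specializing $t=c$ then visibly gives the cocycle $c\,x^{-1}$, and this explicit description can replace the Ext formalism entirely if preferred.
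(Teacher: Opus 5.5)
Your Ext-theoretic argument is correct, and the construction is the paper's: $\Gg$ is the extension with class $te\in H^1(B,\mathcal O_B(-2))\otimes_{\mathbb C}\mathbb C[t]$. Like you in your Ext paragraph, the paper simply uses that this class specializes to $ce$ on $B\times\{c\}$, so the step you call the main obstacle is not really one.

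The real difference is how you identify $\Gg_c$ for $c\neq0$. The paper uses that $\Ext^1_B(\mathcal O_B(1),\mathcal O_B(-1))$ is one-dimensional. Hence $ce$ is a nonzero multiple of the class of the twisted Euler sequence $0\to\mathcal O_B(-1)\to\mathcal O_B^{\oplus 2}\to\mathcal O_B(1)\to0$, and rescaling a class does not change the middle term. You instead combine Grothendieck splitting with the connecting map to get $H^0(\Gg_c(-1))=0$. Your route needs no model extension and does not use $\dim\Ext^1=1$: it shows every nonsplit extension of $\mathcal O_B(1)$ by $\mathcal O_B(-1)$ is trivial. The paper's route is shorter.

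The final \v{C}ech paragraph, however, is wrong and should be fixed or dropped, especially since you offer it as a replacement for the Ext argument. In any convention your matrix defines the constant split family, because
\[
\begin{pmatrix} x^{-1} & t\,x^{-1}\\ 0 & x\end{pmatrix}
=\begin{pmatrix} x^{-1} & 0\\ 0 & x\end{pmatrix}
\begin{pmatrix} 1 & t\\ 0 & 1\end{pmatrix}.
\]
The second factor is invertible and regular on both $U_0\times T$ and $U_1\times T$, so it is absorbed by a change of frame. You get $\pi_B^*(\mathcal O_B(-1)\oplus\mathcal O_B(1))$ for every $t$, with no jump.

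The mistake is treating the off-diagonal entry $h$ as the cocycle of the extension class. Comparing local splittings, the cocycle in $\mathcal{H}om(\mathcal O_B(1),\mathcal O_B(-1))=\mathcal O_B(-2)$ is $h$ times a transition function. With $f_0=Mf_1$ and $x=z_1/z_0$, it is $x^{-1}h$ in the frame $z_0^{-2}$. So $h=tx^{-1}$ gives $tx^{-2}z_0^{-2}=tz_1^{-2}$, which is a coboundary. The generator $1/(z_0z_1)$ corresponds to $h=1$.

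The correct matrix has upper-right entry $t$. Then for $c\neq0$,
\[
\begin{pmatrix} x^{-1} & c\\ 0 & x\end{pmatrix}
=\begin{pmatrix} c & 0\\ x & 1\end{pmatrix}
\begin{pmatrix} 0 & -c\\ 1 & x^{-1}\end{pmatrix}^{-1},
\]
with the two factors in $GL_2(\mathbb C[x])$ and $GL_2(\mathbb C[x^{-1}])$ respectively. Hence $\Gg_c$ is trivial, as required.
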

\begin{proof}
By the projection formula and the affineness of $T$,
\[
\Ext^1_{B\times T}\bigl(\pi_B^*\mathcal O_B(1),\pi_B^*\mathcal{O}_B(-1)\bigr)
=H^1(B,\mathcal O_B(-2))\otimes_{\mathbb C}\mathbb C[t].
\]
$H^1(B,\mathcal O_B(-2))$ is of dimension $1$. Thus we may pick a generator $e$ of $H^1(B,\mathcal O_B(-2))$ and let $\Gg$ be the extension with class $te$.

$\Gg$ satisfies our requirements. When $c=0$, $ce=0$, so
$$\Gg_0\cong\mathcal O_B(-1)\oplus\mathcal O_B(1).$$
When $c\not=0$, $ce\not=0$ in the one-dimensional group $\Ext^1_{B}\!\bigl(\mathcal O_B(1),\mathcal O_B(-1)\bigr)$, hence proportional to the class of the Euler sequence on $\mathbb P^1$. Thus
$\Gg_c\cong\mathcal O_B^{\oplus 2}$. The in particular part is clear.
\end{proof}

\begin{lem}\label{lem:cons-sigma}
Notation and conditions as in Lemma~\ref{lem:A}. Let $P_0:=\mathbb P_B(\Ee_0)$, let $p_0\colon P_0\to B$ be the associated contraction, and let $y_0\in H^0(P_0,\mathcal{O}_{P_0}(1))$ be induced by the summand $\mathcal{O}_B\subset\mathcal{E}_0$. Pick distinct sections $a,b,s\in H^0(B,\mathcal{O}_B(1))$ and denote their zero loci by the same letters. Then:
\begin{enumerate}
    \item There exists $q\in H^0(B\times T,\Gg)$ lifting $\pi_B^*s$ through the surjection
    $$\Gg\twoheadrightarrow\pi_B^*\mathcal{O}_B(1).$$
    In particular, we may identify $q_0$ as a section in $\mathcal{O}_{P_0}(1)$.
    \item We define
    \begin{equation}\label{eq:sigma-zero}
\sigma_0=ay_0^4+bq_0^4\in
H^0\!\bigl(P_0,\mathcal O_{P_0}(4)\otimes p_0^*\mathcal O_B(1)\bigr).
\end{equation}
Then:
\begin{enumerate}[label=(\roman*)]
    \item $\sigma_0$ is non-zero along any geometric fiber of $p_0$.
    \item Set $K:=\mathbb C(B)$ with fiber coordinates $u,v,w$ under the identification of the splitting
$$\Ee_0\cong\mathcal{O}_B(-1)\oplus\mathcal{O}_B\oplus \mathcal{O}_B(1),$$
then the generic fiber equation of $\sigma_0=0$ has the form $r_vv^4+r_ww^4$ with $r_v,r_w\in K^*$, and this polynomial is square-free in $K[u,v,w]$.
\end{enumerate}
\end{enumerate}
\end{lem}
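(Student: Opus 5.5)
For part (1), I will apply $\pi_{T*}$, or equivalently take global sections, to the extension \eqref{eq:jumping-extension}. Since $B\times T\to T$ is affine over an affine base, $H^i(B\times T,\pi_B^*\mathcal O_B(-1))=H^i(B,\mathcal O_B(-1))\otimes\mathbb C[t]=0$ for $i=0,1$. The long exact sequence therefore gives an isomorphism
\[
H^0(B\times T,\Gg)\xrightarrow{\ \sim\ } H^0(B\times T,\pi_B^*\mathcal O_B(1)).
\]
So $\pi_B^*s$ has a unique lift $q$. Restricting to $t=0$ gives $q_0\in H^0(B,\Gg_0)\subset H^0(B,\Ee_0)=H^0(P_0,\mathcal O_{P_0}(1))$, using $p_{0*}\mathcal O_{P_0}(1)=\Ee_0$.

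The splitting $\Gg_0\cong\mathcal O_B(-1)\oplus\mathcal O_B(1)$ can be chosen compatibly with the extension: the sub $\mathcal O_B(-1)$ stays the sub, and the quotient map is projection to $\mathcal O_B(1)$. In these coordinates $q_0=(\alpha,s)$ with $\alpha\in H^0(\mathcal O_B(-1))=0$, so $q_0=(0,s)$.

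Next I fix fiber coordinates. Let $u,v,w$ be the tautological linear forms attached to the summands $\mathcal O_B(-1),\mathcal O_B,\mathcal O_B(1)$, so that $y_0=v$ and $q_0=s\,w$, with $s$ viewed as a local function after trivializing. Then
\[
\sigma_0=a\,v^4+b\,s^4\,w^4 .
\]
Over a point $x\in B$, (i) asks that this form be nonzero in $k(x)[u,v,w]$. It is nonzero unless $a(x)=0$ and $b(x)s(x)^4=0$ simultaneously. Since $a,b,s$ are distinct sections of $\mathcal O_B(1)$, which I read as having pairwise distinct zero points, $a(x)=0$ forces $b(x)\neq0$ and $s(x)\neq0$. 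This holds at every closed point, and also at the generic point, where all coefficients are nonzero; geometric fibers only involve base extension, which does not kill a nonzero form.

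For (ii), on the generic fiber $r_v=a$ and $r_w=bs^4$, read as elements of $K^*$ after trivializing, and both are nonzero. Square-freeness is checked over $\overline K$. Set $\lambda^4=-r_w/r_v$. Then
\[
r_vv^4+r_ww^4=r_v\prod_{\zeta^4=1}(v-\zeta\lambda w),
\]
which is a product of four pairwise non-associate linear forms because $\lambda\ne0$ and the characteristic is zero. Hence the form has no repeated factor over $\overline K$, and so none over $K$, since a square factor over $K$ would stay a square factor over $\overline K$.

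The main obstacle is the bookkeeping that identifies $q_0$ with $s\cdot w$. This requires the splitting of $\Gg_0$ to be compatible with the surjection, which holds because $\operatorname{Hom}(\mathcal O_B(1),\mathcal O_B(-1))=0$ forces any splitting to respect the filtration. It also requires using the correct convention for $\mathbb P(\Ee)$, chosen so that $H^0(\mathcal O(1))=H^0(\Ee)$ as in Lemma~\ref{lem:C}. The rest is a direct check.
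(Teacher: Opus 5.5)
Your proposal is correct and follows the paper's proof essentially step for step: the lift comes from $H^1(B\times T,\pi_B^*\mathcal O_B(-1))=0$, you get $q_0=(0,s)$ from $H^0(B,\mathcal O_B(-1))=0$, and (i) is the same coefficient check, where you helpfully make explicit that the zero points of $a,b,s$ must be distinct; the only difference is that in (ii) you factor $r_vv^4+r_ww^4$ over $\overline K$ into four non-associate linear forms, whereas the paper observes that a repeated factor would divide both $4r_vv^3$ and $4r_ww^3$. Two small slips in wording: the formula $H^i(B\times T,\pi_B^*\mathcal O_B(-1))=H^i(B,\mathcal O_B(-1))\otimes\mathbb C[t]$ holds because $\pi_B$ is affine (or by flat base change), not because $B\times T\to T$ is affine, and the compatible splitting of $\Gg_0$ exists because the class $te$ vanishes at $t=0$, whereas $\operatorname{Hom}(\mathcal O_B(1),\mathcal O_B(-1))=0$ only makes that splitting unique.
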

\begin{proof}
The obstruction to lifting $s$ through $\Gg\twoheadrightarrow\mathcal O_B(1)$ lies in
\[
H^1(B\times T,\pi_B^*\mathcal O_B(-1))
=H^1(B,\mathcal O_B(-1))\otimes_{\mathbb C}\mathbb C[t]=0.
\]
Thus there exists a lift
$q\in H^0(B\times T,\Gg)$ of $s$. This implies (1). On $B\times\{0\}$, the splitting writes 
$$q_0=(q_0',s),\quad  q_0'\in H^0(B,\mathcal O_B(-1))=0,$$ 
so $q_0=(0,s)$. Fix $\beta\in B$ and let $u,v,w$ be the fiber coordinates adapted to the splitting of $\Ee_0$ at $\beta$. Then $y_0|_{p_0^{-1}(\beta)}=v$ and $q_0|_{p_0^{-1}(\beta)}=s(\beta)w$. So
\begin{equation}
\sigma_0|_{p_0^{-1}(\beta)}=a(\beta)v^4+b(\beta)s(\beta)^4w^4.
\end{equation}
If $a(\beta)\not=0$, then the coefficient of $v^4$ is not $0$. If $a(\beta)=0$, then $\beta=a$, so $b(\beta)\not=0$ and $s(\beta)\not=0$, so the coefficient of $w^4$ is not $0$. This proves (2)(i). 

Over $K=\mathbb C(B)$, $q_0$ is a nonzero scalar multiple of the coordinate $w$ adapted to the $\mathcal O_B(1)$-summand.
The generic equation is $r_vv^4+r_ww^4$ with $r_v=a_K$ and $r_w=b_Ks_K^4\in K^*$.
Its partial derivatives are $4r_vv^3$ and $4r_ww^3$.
In characteristic zero, any common factor of $r_vv^4+r_ww^4$, $4r_vv^3$, and $4r_ww^3$ divides
$\gcd(v^3,w^3)=1$.
Hence the polynomial is square-free.
\end{proof}

\section{Construction of the example}\label{sec:divisor-geometry}

In this section, we construct the flat projective family that gives the positive answer to Question~\ref{ques:kk25}.

\begin{consthm}\label{consthm:B}
Notation and conditions as in Lemmas~\ref{lem:A} and~\ref{lem:cons-sigma}. Let
\begin{equation}\label{eq:rho-and-L}
    \rho\colon P:=\mathbb P_{B\times T}(\Ee)\to B\times T,\quad \Ll:=\mathcal{O}_{P}(4)\otimes\rho^*\pi_B^*\mathcal{O}_B(1),
\end{equation}
and let $y\in H^0(P,\mathcal{O}_P(1))$ be the section induced by the $\mathcal{O}_{B\times T}$-summand of $\Ee$. Then there exists $\tau\in H^0(P,\Ll)$ such that
\begin{equation}\label{eq:sigma-total}
\sigma=ay^4+bq^4+t\tau\in H^0(P,\Ll)
\end{equation}
satisfying the following.
\begin{enumerate}
    \item $\sigma|_{P_0}=\sigma_0$.
    \item There exists a finite set $Z\subset T^{*}:=T\setminus\{0\}$ with $C:=T\setminus Z$ satisfying the following. Let $P_C$ be the base change of $P$ over $C$ and let $X_C=(\sigma=0)\subset P_C$ be the scheme defined as a zero divisor in $P_C$. Let $g\colon X_C\to C$ be the induced morphism. Then:
    \begin{enumerate}[label=(\roman*)]
        \item Every fiber over $C\setminus\{0\}$ is smooth.
        \item $g$ is projective and flat.
        \item Every fiber of $g$ is CM and reduced.
        \item We have
        \begin{equation}\label{eq:consthm-h1-values}
    h^1(X_{C,0},\mathcal{O}_{X_{C,0}})=1\quad \text{and}\quad h^1(X_{C,c},\mathcal{O}_{X_{C,c}})=0\quad \text{for any closed point}\quad c\in C\setminus\{0\}.
\end{equation}
    \end{enumerate}
\end{enumerate}
\end{consthm}
\begin{proof}
Let $\eta$ be the generic point of $T$. By Lemma~\ref{lem:A}, $\Ee_\eta\cong\mathcal O_{\mathbb P^1_{\mathbb C(t)}}^{\oplus 3}$, so
$$P_{\eta}\cong\mathbb P_{\mathbb C(t)}^1\times_{\mathbb C(t)}\mathbb P_{\mathbb C(t)}^2\quad \text{and}\quad \Ll_{\eta}\cong\mathcal{O}(1,4).$$
Since $\mathcal{O}(1,4)$ is very ample, with coordinates $[z_0:z_1]$ on $\mathbb P^1$ and $[x_0:x_1:x_2]$ on $\mathbb P^2$, we set
$$F:=z_0(x_0^4+x_1^4)+z_1(x_1^4+x_2^4).$$
Then $F$ cuts out a smooth divisor. By Bertini's theorem, there exists a non-empty Zariski open subset $U$ of $H^0(P_{\eta},\Ll_{\eta})$ of smooth divisors. Since
$$\rho_*\mathcal{O}_P(4)=\mathrm{Sym}^4\Ee,\quad R^i\rho_*\mathcal{O}_P(4)=0\quad \text{for any}\quad i>0,$$
the base change map
$$H^0(P,\Ll)\otimes_{\mathbb C[t]}\mathbb C(t)\xrightarrow{\cong}H^0(P_{\eta},\Ll_{\eta})$$
is an isomorphism.

Since $t\in\mathbb C(t)^*$, the affine map
$$v\mapsto (ay^4+bq^4)|_{P_{\eta}}+tv$$
is a surjection onto $H^0(P_{\eta},\Ll_{\eta})$. Let $V$ be the inverse image of $U$, then $V$ is a non-empty Zariski open subset of $H^0(P,\Ll)\otimes_{\mathbb C[t]}\mathbb C(t)$. 

Choose $m_1,\dots,m_N\in H^0(P,\Ll)$ such that the images of $m_i$ are a $\mathbb C(t)$-basis of $H^0(P,\Ll)\otimes_{\mathbb C[t]}\mathbb C(t)$. The complement of $V$ is cut out by finitely many non-zero polynomial conditions over $\mathbb C(t)$. Since $\mathbb C$ is infinite, by clearing the denominators in $\mathbb C[t]$, we may choose $\lambda_1,\dots,\lambda_N\in\mathbb C$ such that
$$\tau=\sum_{i=1}^N\lambda_im_i\in V.$$
Thus $\sigma|_{P_0}=\sigma_0$ and $\sigma|_{P_{\eta}}$ is a smooth divisor of $P_{\eta}$. In particular, this implies (1).

By openness of smoothness, there exists a proper closed subset $Z$ of $T\setminus\{0\}$ such that $X\to T$ is smooth over $T\setminus(Z\cup\{0\})$. We let $C:=T\setminus Z$ and let $X_C\to C$ be the base change of $X\to T$. This construction implies (2)(i).

We prove (2)(ii). Fix $x\in X_C$ and put $c=(\pi_T\circ\rho)(x)\in C$. Let \(A=\mathcal O_{P_C,x}\) and \(R=\mathcal O_{C,c}\). If \(c\) is the generic point, there is nothing to prove. Otherwise \(R\) is a DVR with uniformizer \(\xi\). Since \(P_C\to C\) is smooth, \(\xi\) is \(A\)-regular. Moreover \(\sigma_c\) is a nonzero section on the integral regular scheme \(P_c\), hence its image is \(A/\xi A\)-regular. Tensoring
\[
0\to A\xrightarrow{\sigma} A\to A/(\sigma)\to0
\]
with \(R/(\xi)\), we get that multiplication by \(\xi\) on \(A/(\sigma)\) is injective. Thus \(A/(\sigma)\) is torsion-free over the DVR \(R\), hence flat. Since $X_C\to P_C$ is a closed immersion and $P_C\to C$ is projective, $X_C\to C$ is projective. This implies (2)(ii).

\medskip

We prove (2)(iii). CM is a local property. For any closed point $c\in C$, $\sigma_c$ cuts out an effective Cartier divisor in the regular threefold $P_c$. Since a Cartier divisor in a regular scheme is CM, $X_c$ is CM for any $c\in C$. For any $c\not=0$ in $C$, $X_c$ is smooth, hence reduced. 

To show that $X_0$ is reduced, by Lemma~\ref{lem:cons-sigma}, we note that $\sigma_0\not=0$ on every $\mathbb P^2$-fiber of $p_0$, so no irreducible component of $X_0$ is the whole fiber. Since \(X_0\) is an effective Cartier divisor in the regular threefold \(P_0\), it is pure two-dimensional. Hence any irreducible component not dominating \(B\) would be a two-dimensional closed subscheme of a fiber \(P_{0,\beta}\cong\mathbb P^2\), hence the whole fiber, which is not possible. Therefore, every irreducible component of $X_0$ dominates $B$. Over $\mathbb C(B)$, the generic fiber of $X_0\to B$ is cut out by $r_vv^4+r_ww^4$, which is square-free by Lemma~\ref{lem:cons-sigma}. Thus $X_0$ is generically reduced at every irreducible component of $X_0$. Since $X_0$ is CM, it is $S_1$, so $X_0$ is reduced.

\medskip

We prove (2)(iv). For each closed $c\in C$, the fiber $X_c\subset\mathbb P_B(\Ee_c)$ is the zero scheme of a nonzero section of
$\mathcal O(4)\otimes p_c^*\mathcal O_B(1)$, and
$\det(\Ee_c)\cong\mathcal O_B$ by Lemma~\ref{lem:A}. We have
\begin{equation}
  \Ee_c\cong\mathcal{O}_B^{\oplus 3}\quad \text{for any}\quad c\not=0\quad  \text{and}\quad \Ee_0\cong \mathcal{O}_B(-1)\oplus\mathcal{O}_B\oplus\mathcal{O}_B(1). 
\end{equation}
(2)(iv) follows from Lemma~\ref{lem:C}.
\end{proof}

\end{document}